\newtheorem{lemma}{Lemma}
\newtheorem{theorem}{Theorem}
\numberwithin{equation}{section}
\journal{Applied Mathematics Letters}
\begin{document}
\nocite{*}
\begin{frontmatter}
\title{Solution Space Characterisation of Perturbed Linear Volterra Integrodifferential Convolution Equations: the $L^p$ case}

\author{John A. D. Appleby$^a$}
\ead{john.appeby@dcu.ie}
\cortext[cor1]{Corresponding author}

\author{Emmet Lawless$^a$\corref{cor1}}
\ead{emmet.lawless6@mail.dcu.ie}

\affiliation{School of Mathematical Sciences, Dublin City University.}
\address{Dublin City University, DCU Glasnevin Campus, School of Mathematical Sciences, Dublin 9, Ireland.}

\begin{abstract}
In this paper we characterise the $L^p$ stability of perturbed linear Volterra integrodifferential convolution equations. Additionally we provide a framework which points to necessary and sufficient conditions on the forcing function that ensures the solution lies in a particular function space. We highlight how such a result is of interest when studying perturbed Stochastic Functional Differential Equations (SFDEs).
\end{abstract}
\end{frontmatter}

\section{Introduction}
Over the past few decades, Volterra integral equations have attracted a great deal of study, traditionally being applied in fields such as population dynamics or epidemic modelling (see e.g. the monograph of Cushing~\cite{Cush}). More recently there has been a surge of research on stochastic Volterra equations in finance in the form of volatility models (see e.g. 
Jaber et. al. \cite{Jab2019}). Although the field has progressed tremendously, there still remains some unanswered questions about classical perturbed deterministic equations, which we aim to address in this paper. The equation of interest is the following\footnote{For ease of notation and to keep calculations compact we consider only the scalar case, but the reader will observe all proofs can easily be extended to finite dimensions with no extra complications.}
\begin{equation} \label{eq. x dynamics}
    x'(t) = \int_{[0,t]}\nu(ds)x(t-s)+f(t), \quad t \geq 0, \\
\end{equation}
where $\nu$ is a finite Borel measure and $x(0)=\xi \in \mathbb{R}$ is a given initial condition. Convolution Volterra equations have been well--understood for some time, particularly when considering linear equations, so we do not provide an exhaustive list of the literature but instead refer the reader to the classical monographs by Gripenberg et al.~\cite{GLS} and Corduneanu \cite{Cor90b} for a thorough exposition of the theory. It is well--known that knowledge of the underlying differential resolvent given by $r'(t)=\int_{[0,t]}\nu(ds)r(t-s),$ for $t>0$ (with $r(0)=1$) is critical if we want to understand the behaviour of \eqref{eq. x dynamics}. The most comprehensive, and very representative, type of perturbation result for equation \eqref{eq. x dynamics} is the following: under the assumption\footnote{We make this necessary assumption to obtain sharp stability results throughout the work.} that $r \in L^1(\mathbb{R}_+)$, a \emph{sufficient} condition for solutions of \eqref{eq. x dynamics} to be in some (reasonable) function space $V$ is for $f \in V$. Such reasonable spaces are numerous and listed in \cite[Thm. 3.3.9]{GLS}. However, this leaves open the question of what conditions on $f$ are \emph{necesssary and sufficent} for the solution to be in $V$. This topic is deeply related to the question of so-called \emph{admissibilty} of (Volterra) operators and of pairs of function spaces. For perturbed Volterra equations, sufficient conditions for solutions to lie in a space $V$, when forcing functions lie in a space $W$ have been extensively discussed in Miller 
\cite[Ch. 5]{miller:1971} and Corduneanu \cite[Ch.2]{cor:1973non}. 

One facet of admissibility theory, which is particularly germane to this paper, is that it often dispels the notion that if  solutions of \eqref{eq. x dynamics} are to be in some space $V$, the ``worst behaving'' perturbing function that is permitted must also lie in $V$. In fact, the case where $V=BC_0([0,\infty),\mathbb{R})$ has already been indirectly proven in the literature (here $BC_0([0,\infty),\mathbb{R})$ is understood to be the space of bounded, continuous functions from $\mathbb{R}_+$ to $\mathbb{R}$ that decay to zero at infinity). The result is as follows: under the assumption that $x$ solves \eqref{eq. x dynamics} and $r \in L^1(\mathbb{R}_+)$ then the following are equivalent:
\begin{align*}
    & (\textbf{A})\quad  x(t;\xi) \in BC_0([0,\infty),\mathbb{R}) \text{ for all } \xi \in \mathbb{R},\\ 
    & (\textbf{B}) \quad F(t;\theta):\mathbb{R}_+ \to \mathbb{R} : t \mapsto \int_t^{t+\theta}f(s)ds \in BC_0([0,\infty),\mathbb{R}) \text{ for each } \theta \in (0,1].
\end{align*} 
That (\textbf{B}) $\implies$ (\textbf{A}) is a consequence of Theorem 11.4.3 in~\cite{GLS} and (\textbf{A}) $\implies$ (\textbf{B}) is an obvious consequence of equation \eqref{eq. integrated x} below. The key result which enables such a sharp condition is Lemma 15.9.2 in \cite{GLS}.  The authors believe this Lemma can be generalised, leading to results roughly of the form $x\in V$ iff $F(\cdot;\theta)\in V$ for all $\theta\in (0,1]$ for many spaces $V$, including spaces of unbounded functions (see e.g. \cite{AP:2017} for summation equations). Conditions of type (\textbf{B}) on forcing terms date back at least to papers of Strauss and Yorke~\cite{SY67a,SY67b}, concerning the stability theory of asymptotically autonomous deterministic differential equations.   

The main aim of this paper is to give a framework to characterise when solutions of \eqref{eq. x dynamics} lie in a space $V$. We outline this procedure when $V=L^p(\mathbb{R}_+)$, settling a conjecture for stochastic equations in \cite{AL}. We show that solutions of \eqref{eq. x dynamics} are in $L^p(\mathbb{R}_+)$  if and only if 
\begin{equation} \label{stability condition on f}
    \left\Vert \int_{\cdot}^{\cdot+\theta}f(s)ds \right\Vert_{L^p(\mathbb{R}_+)} < +\infty, \quad \text{for each } \theta \in (0,1].
\end{equation}
This ``interval average" condition greatly expands the class of perturbing functions which preserve stability. An explicit example is given of a function exhibiting unbounded oscillatory behaviour which satisfies \eqref{stability condition on f} but is not in $L^p(\mathbb{R}_+)$ for any $p$. Furthermore, conditions on the interval average are crucial in the analysis of perturbed SFDEs. In  \cite{AL} the authors have provided a collection of mean square stability results for the equation 
\begin{equation} \label{eq. Stochastic X}
    dX(t)=\left(f(t)+\int_{[0,\tau]}X(t-s)\nu(ds)\right)dt+\left(g(t)+\int_{[0,\tau]}X(t-s)\mu(ds)\right)dB(t),
\end{equation}
where $\nu$ and $\mu$ are finite Borel measures, $\tau > 0$ is a fixed constant and $B$ is a one dimensional Brownian motion. The interval average condition on the deterministic perturbing functions $f$ and $g$ proves necessary for sharp mean square stability results. Indeed, granted the asymptotic stability of the underlying stochastic equation without perturbations (see e.g. \cite{AMR} for unperturbed stability conditions), it is shown that $f$ and $g^2$ must obey condition (\textbf{B}) above, if the mean square is to obey $\mathbb{E}[X^2(t)]\to 0$ as $t\to\infty$. If we wish $\mathbb{E}[X^2]$ to be in $L^1(\mathbb{R}_+)$, the necessary and sufficient conditions on $f$  and $g$ are $g\in L^2(\mathbb{R}_+)$ and $\int_0^t e^{-(t-s)}f(s)\,ds \in L^2(\mathbb{R}_+)$. Theorem \ref{thm. x L^p stability} below with $\nu(dt)=-\delta_{0}(dt)$ shows that the second condition is equivalent to \eqref{stability condition on f} with $p=2$ (here $\delta_0$ is the unit point mass at zero). It is open what happens in the (stochastic) Volterra case: this will be addressed in a future work.

\section{Results}
If $M(\mathbb{R}_+)$ is the space of finite signed Borel measures on $\mathbb{R}_+$, and $\nu\in M(\mathbb{R}_+)$, we consider the halfline Volterra equation given by
\begin{equation} \label{eq. x}
    x'(t)  = \int_{[0,t]}\nu(ds)x(t-s)+f(t), \quad t \geq 0; \quad    x(0)  = \xi,
\end{equation}
where $\xi \in \mathbb{R}$ is the initial condition. We say $x$ is a solution of \eqref{eq. x} on an interval $(0,T]$ whenever $x$ is locally absolutely continuous, satisfies the initial condition and obeys the dynamics  in \eqref{eq. x} for almost all $t \in (0,T]$. We stipulate throughout, and \emph{without further reference}, that $f \in L^1_{loc}(\mathbb{R}_+)$, which guarantees that such a solution $x$ exists (see Gripenberg et al.\cite[Thm. 3.3.3]{GLS}). In particular, the solution satisfies a variation of constants formula
\begin{equation} \label{eq. VOC x}
    x(t)=r(t)\xi+\int_0^tr(t-s)f(s)ds, \quad t \geq 0,
\end{equation}
where $r$ is the so--called differential resolvent of $\nu$, which is the unique absolutely continuous function on $\mathbb{R}_+$ satisfying
\begin{equation} \label{eq. r}
  r'(t)=\int_{[0,t]}\nu(ds)r(t-s), \quad t>0; \quad r(0)=1. 
\end{equation}

Before we present the main result of this paper we prepare two lemmas, the first of which is required in the proof of the second. 
\begin{lemma} \label{lem. L^p equiv UB}
Suppose $\int_{\cdot}^{\cdot+\theta}f(s)ds \in L^p(\mathbb{R}_+) \text{ for all } \theta \in (0,1]$. Then,
\begin{equation} \label{eq.unifLp}
    \left\Vert \int_{\cdot}^{\cdot+\theta}f(s)ds \right\Vert_{L^p(\mathbb{R}_+)} < B, \quad \text{ for all } \theta \in (0,1].
\end{equation}
\end{lemma}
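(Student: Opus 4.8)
The plan is to work with the shorthand $F(t;\theta):=\int_t^{t+\theta}f(s)\,ds$ and $\phi(\theta):=\|F(\cdot;\theta)\|_{L^p(\mathbb{R}_+)}$, so that the hypothesis reads $\phi(\theta)<\infty$ for every $\theta\in(0,1]$ and the goal is $\sup_{\theta\in(0,1]}\phi(\theta)<\infty$. The backbone is the elementary additivity identity $F(t;\theta+\delta)-F(t;\theta)=F(t+\theta;\delta)$, valid for all $t\ge0$ and $\theta,\delta\ge0$. Taking $L^p$ norms and using that a left translation only shrinks the half-line norm yields, on the one hand, subadditivity $\phi(\theta+\delta)\le\phi(\theta)+\phi(\delta)$, and on the other the exact identity $\int_\theta^\infty|F(u;\delta)|^p\,du=\|F(\cdot;\theta+\delta)-F(\cdot;\theta)\|_{L^p}^p$ (with the obvious essential-supremum analogue when $p=\infty$), which is the engine of the proof.

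First I would record that $\phi$ is measurable: the map $(t,\theta)\mapsto F(t;\theta)$ is jointly measurable, so $\phi(\theta)^p=\int_0^\infty|F(t;\theta)|^p\,dt$ is measurable by Tonelli, and it is finite on $(0,1]$ by hypothesis. A finite measurable subadditive function is locally bounded on the open half-line (a classical Baire-category argument), so in particular $\Phi:=\sup_{\theta\in[1/4,1/2]}\phi(\theta)<\infty$. This furnishes a reference bound on a fixed interval bounded away from $0$.

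The heart of the matter is to control $\phi(\theta)$ as $\theta\to0^+$: this is precisely where subadditivity is useless, since it only ever estimates $\phi$ at a \emph{larger} argument in terms of its values at smaller ones. To reverse the direction I would split the defining integral at the reference point. For $\delta\in(0,1/4)$ write
\[
\phi(\delta)^p=\int_0^{1/4}|F(u;\delta)|^p\,du+\int_{1/4}^\infty|F(u;\delta)|^p\,du.
\]
The tail is handled by the engine identity with $\theta=1/4$: it equals $\|F(\cdot;1/4+\delta)-F(\cdot;1/4)\|_{L^p}^p\le(\phi(1/4+\delta)+\phi(1/4))^p\le(2\Phi)^p$, since $1/4+\delta\in[1/4,1/2]$. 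The head is where the standing assumption $f\in L^1_{loc}(\mathbb{R}_+)$ enters: for $u\in[0,1/4]$ and $\delta\le1$ we have $|F(u;\delta)|\le\int_0^{5/4}|f|=:C_1<\infty$, so the head is at most $\tfrac14 C_1^p$. Hence $\phi(\delta)\le B_0:=(\tfrac14 C_1^p+(2\Phi)^p)^{1/p}$ for all $\delta\in(0,1/4)$.

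Finally I would propagate this near-origin bound to the whole of $(0,1]$ using subadditivity in its natural direction: any $\theta\in(0,1]$ is a sum of five equal pieces $\theta/5<1/4$, so $\phi(\theta)\le5\,\phi(\theta/5)\le5B_0$, giving the claimed uniform bound $B=5B_0$. The main obstacle throughout is the small-$\theta$ regime; the one external ingredient is the local boundedness of measurable subadditive functions, and the decisive step is the integral splitting, which trades the uncontrolled near-origin tail for a fixed interval where the reference bound and the $L^1_{loc}$-integrability of $f$ both apply. The case $p=\infty$ is identical, with essential suprema replacing the integrals.
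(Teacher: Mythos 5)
Your proof is correct, but it organises the argument differently from the paper in its key middle step, so a comparison is worthwhile. Both proofs ultimately rest on the same two pillars: the additivity identity $F(t;\theta+\delta)=F(t;\theta)+F(t+\theta;\delta)$ (your ``engine''; the paper uses exactly this when it bounds $|\int_t^{t+\theta-\theta'}f|$ by $|F(t-\theta';\theta)|+|F(t-\theta';\theta')|$), and a measure-theoretic input guaranteeing that $\varphi(\theta)=\|F(\cdot;\theta)\|_{L^p}^p$ is bounded on \emph{some} set of $\theta$'s of positive measure. The paper deploys the latter directly: it applies the Steinhaus difference-set lemma (GLS Lemma 15.9.3) to a level set $Q_m$ of positive measure, obtains a uniform bound for all $T=\theta-\theta'\in(0,\epsilon)$, and then subdivides a general $\theta$ into $N$ pieces of length below $\epsilon$. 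You instead package the measure theory into the classical local-boundedness theorem for finite measurable subadditive functions (Hille--Phillips), obtaining a bound $\Phi$ on $[1/4,1/2]$ --- i.e.\ \emph{away} from the origin --- and then transfer it to small $\delta$ by the genuinely new step of splitting $\int_0^\infty=\int_0^{1/4}+\int_{1/4}^\infty$, controlling the tail via the engine identity and the head via $f\in L^1_{\mathrm{loc}}$; subadditivity ($\phi(\theta)\le 5\phi(\theta/5)$) then finishes. Your route is more modular, yields a cleanly traceable constant, and avoids re-deriving the difference-set machinery; the paper's is self-contained modulo one cited lemma and handles the small-$\theta$ regime in one stroke. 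One small correction: for merely measurable subadditive functions the classical proof of local boundedness is \emph{not} Baire category but precisely the Steinhaus positive-measure argument the paper uses; if you want a genuine Baire-category justification, note that $\varphi$ is lower semicontinuous by Fatou (since $F(t;\cdot)$ is continuous), so its level sets are closed and Baire applies. Either justification is valid, so this is an attribution issue rather than a gap.
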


Note that the upper bound $B$ is independent of the parameter $\theta$. This is required if one wishes to bound $|F(\cdot;\theta)|^p$ while integrating over $\theta$. Before proving Lemma \ref{lem. L^p equiv UB} we remark that its reverse implication is obviously true and will be used in proofs of later results.

\begin{proof}[Proof of Lemma \ref{lem. L^p equiv UB}]
By hypothesis, for all $\theta \in (0,1]$ we have that,
\begin{equation*}
    \varphi(\theta) \coloneqq \int_{0}^\infty |F(t;\theta)|^pdt < +\infty.
\end{equation*}
Moreover, since $f$ is locally integrable, it follows that $|F|^p$ is indeed a well--defined measureable function and moreover the non-negativity of $|F|^p$ ensures that $\varphi$ is also measurable; this holds by e.g. Theorem 8.8 in Rudin \cite{Rudin2}. Thus by hypothesis for each $m \in \mathbb{N}$, the set
$Q_m=\left\{\theta\in[0,1]: \varphi(\theta) \leq m \right\}$ is well--defined and measurable. Notice also that $Q_m \subseteq Q_{m+1}$ and that the hypothesis ensures that $\bigcup_{m=1}^\infty Q_m =[0,1].$ This ensures that there exists at least one $m' \in \mathbb{N}$ such that the set $Q_{m'}$ has positive Lebesgue measure. In light of this we fix $m=m'$. Then applying Lemma 15.9.3 in Gripenberg et al. \cite{GLS} to the set $Q_m$ ensures the set $Q_m-Q_m \coloneqq \left\{\theta-\theta':\theta,\theta' \in Q_m\right\}$, contains an interval $(-\epsilon,\epsilon).$ As both $\theta$,$\theta' \in Q_m$ we have that $\varphi(\theta) \leq m$ and $\varphi(\theta') \leq m$. Thus without loss of generality we take $\theta > \theta'$ and by extending $f$ to be equal to zero on the interval $[-1,0)$ we can say for $t\geq0$,
\begin{align*}
    \left| \int_{t}^{t+\theta-\theta'} f(s)ds\right| & \leq \left| \int_{t-\theta'}^{t+\theta-\theta'} f(s)ds\right| + \left| \int_{t-\theta'}^{t} f(s)ds\right|.
\end{align*}
Using the inequality $(a+b)^p \leq 2^{p-1}(a^p+b^p)$ for $a,b \geq 0$, integrating both sides and letting $g(t) := |\int_0^tf(s)ds|^p$ gives 
\begin{align*}
    \frac{1}{2^{p-1}}\int_0^\infty\left| \int_{t}^{t+\theta-\theta'} f(s)ds\right|^p dt & \leq \int_0^\infty\left| \int_{t-\theta'}^{t+\theta-\theta'} f(s)ds\right|^p dt + \int_0^\infty\left| \int_{t-\theta'}^{t} f(s)ds\right|^p dt \\
    & = \int_{-\theta'}^0g(\tau+\theta) d\tau +\varphi(\theta) + \int_{-\theta'}^0 g(\tau+\theta') d\tau +\varphi(\theta')\\
    & \leq \sup_{-\theta' \leq \tau \leq 0}g(\tau+\theta) + \sup_{-\theta' \leq \tau \leq 0}g(\tau+\theta') +2m \\
    & \leq 2\left(\sup_{0\leq t \leq 1}g(t) +m\right).
\end{align*}
Hence, for all $T \in (-\epsilon,\epsilon)$ we have
\begin{equation*}
    \int_0^\infty\left| \int_{t}^{t+T} f(s)ds\right|^p dt \leq 2^p\left(\sup_{0\leq t \leq 1}\left|\int_{0}^{t} f(s)ds\right|^p +m\right) \eqqcolon B_1.
\end{equation*}
Note that $B_1$ is independent of $T$, and take any $\theta\in (0,1]$. There is a minimal $N=N(\epsilon) \in \mathbb{N}$ such that $N\epsilon \geq 2$, and $\theta \in [n\frac{\epsilon}{2},(n+1)\frac{\epsilon}{2}]$ for exactly one $n \in \left\{0,\ldots,(N-1)\right\}$. Write next 
\begin{equation*}
    \int_t^{t+\theta}f(s)ds = \sum_{j=1}^N \int_{t+\frac{(j-1)\theta}{N}}^{t+\frac{j\theta}{N}}f(s)ds=\sum_{j=1}^{N}\int_{t+(j-1)\theta^{\ast}}^{t+j\theta^{\ast}}f(s)ds,
\end{equation*}
where $\theta^{\ast}:=\frac{\theta}{N} \leq \frac{n+1}{2N}\epsilon \leq \frac{\epsilon}{2} < \epsilon$. Thus
\begin{equation*}
    \left| \int_t^{t+\theta}f(s)ds \right|^p \leq N^{p-1}\sum_{j=1}^N \left| \int_{t+(j-1)\theta^{\ast}}^{t+(j-1)\theta^{\ast}+\theta^{\ast}}f(s)ds\right|^p,
\end{equation*}
and since $\theta^{\ast} \in (0,\epsilon)$, for any $\theta \in (0,1]$ we have
\begin{equation*}
 \int_0^\infty \left|\int_t^{t+\theta}f(s)ds \right|^p dt \leq  N\cdot B_1 \eqqcolon B_p.
\end{equation*}
Since $B_p$ depends only on $N$ and $B_1$, and both are $\theta$--independent, the proof is complete. 
\end{proof}
Both Lemmas \ref{lem. L^p equiv UB} and \ref{lem. f decomposition} are adapted from a decomposition result for a function $f$ in which $F(t;\theta) \to 0$ as $t \to \infty$ for all $\theta \in [0,1]$. This result can be found as Lemma 15.9.2 in~\cite{GLS}.
\begin{lemma} \label{lem. f decomposition}
    Suppose $\Vert \int_\cdot^{\cdot+\theta}f(s)ds \Vert_{L^p(\mathbb{R}_+)}<+\infty$ for all $\theta \in (0,1]$. Then $f=f_1+f_2$ where $f_1 \in L^p(\mathbb{R}_+)$ and $f_2$ is such that $\int_0^tf_2(s)ds \in L^p(\mathbb{R}_+)$
\end{lemma}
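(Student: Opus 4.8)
The plan is to build the decomposition explicitly from a moving-average smoothing of the primitive of $f$. Writing $\Phi(t)=\int_0^t f(s)\,ds$, which is absolutely continuous since $f\in L^1_{loc}(\mathbb{R}_+)$, the hypothesis reads $\Phi(\cdot+\theta)-\Phi(\cdot)\in L^p(\mathbb{R}_+)$ for every $\theta\in(0,1]$, and by Lemma~\ref{lem. L^p equiv UB} these norms are bounded by a single constant $B$ independent of $\theta$. The natural candidate for the $L^p$ part is the length-one interval average $f_1(t):=\int_t^{t+1}f(s)\,ds=F(t;1)$, which lies in $L^p(\mathbb{R}_+)$ immediately, being the $\theta=1$ instance of the hypothesis. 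I would then set $f_2:=f-f_1$ and compute its primitive directly.

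The key computation is that, using $f=\Phi'$ and Leibniz's rule, $\int_0^t f_2(s)\,ds=G(t)-G(0)$, where $G(t):=\Phi(t)-\int_t^{t+1}\Phi(s)\,ds=-\int_0^1 F(t;\theta)\,d\theta$. The crucial point is to show $G\in L^p(\mathbb{R}_+)$, and this is exactly where the uniform bound from Lemma~\ref{lem. L^p equiv UB} is indispensable: by Minkowski's integral inequality,
\[
\|G\|_{L^p(\mathbb{R}_+)}=\Bigl\|\int_0^1 F(\cdot;\theta)\,d\theta\Bigr\|_{L^p(\mathbb{R}_+)}\le\int_0^1\|F(\cdot;\theta)\|_{L^p(\mathbb{R}_+)}\,d\theta\le B<\infty,
\]
the joint measurability of $(t,\theta)\mapsto F(t;\theta)$ being routine. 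Note that mere finiteness of each $\|F(\cdot;\theta)\|_{L^p(\mathbb{R}_+)}$ would not suffice here; it is the $\theta$-uniformity that controls the integral over $\theta$, which is precisely why Lemma~\ref{lem. L^p equiv UB} is proved first.

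The one genuine obstacle is the boundary constant $G(0)$: since an antiderivative based at $0$ is pinned to vanish at $0$, the identity $\int_0^t f_2=G(t)-G(0)$ shows that $\int_0^\cdot f_2$ differs from the $L^p$ function $G$ by the constant $-G(0)$, which is not in $L^p(\mathbb{R}_+)$ unless it happens to be zero. I would remove it by a harmless local correction: fix a bounded function $\eta$ supported in $[0,1]$ with $\int_0^1\eta(s)\,ds=-G(0)$, and replace $f_1$ by $f_1+\eta$ and $f_2$ by $f_2-\eta$. Then $f_1+\eta$ still lies in $L^p(\mathbb{R}_+)$, while for $t\ge 1$ the new primitive equals $G(t)$ exactly and on $[0,1]$ it is bounded, so $\int_0^\cdot(f_2-\eta)\in L^p(\mathbb{R}_+)$. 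This yields the desired splitting, with Minkowski's inequality fed by Lemma~\ref{lem. L^p equiv UB} as the heart of the argument.
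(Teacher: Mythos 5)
Your proof is correct and follows essentially the same route as the paper: a unit-length moving average for $f_1$, with $\int_0^\cdot f_2(s)\,ds$ rewritten as an average of $F(\cdot;\theta)$ over $\theta\in[0,1]$ and controlled by the $\theta$-uniform bound $B$ from Lemma~\ref{lem. L^p equiv UB}. The only cosmetic differences are that the paper takes the backward window $f_1(t)=\int_{t-1}^t f(s)\,ds$ with $f$ extended by zero on $[-1,0)$, which makes your boundary constant $G(0)$ vanish automatically and renders the $\eta$-correction unnecessary, and it uses Jensen's inequality plus Fubini where you use Minkowski's integral inequality.
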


\begin{proof}[Proof of Lemma \ref{lem. f decomposition}]
Take $f_1(t)=\int_{t-1}^tf(s)ds$, $t\geq0$. Here we consider an extended version of $f$ where $f(t)\equiv 0$, for all $t<0$. Then $f_1$ is continuous and in $L^p(\mathbb{R}_+)$. Next we let $f_2=f-f_1$ for $t\geq0$. Then for $t\geq1$ we have
\begin{equation} \label{eq.key1}
    \int_0^tf_2(s)ds = \int_0^tf(s)ds-\int_0^t\int_{s-1}^sf(u)duds=\int_0^1\int_{t+v-1}^tf(u)dudv,
\end{equation}
the last identity following from a routine but tedious set of calculations, involving repeated application of Fubini's theorem. By Jensen's inequality, we get
\begin{equation*}
\left|\int_0^tf_2(s)ds \right|^p \leq \int_0^1\left|\int_{t+v-1}^tf(u)du\right|^p dv.
\end{equation*}
Integrating over $t$ on both sides and using Fubini's theorem, it follows that,
\begin{equation*}
    \int_1^\infty \left|\int_0^tf_2(s)ds \right|^p dt \leq \int_0^1\left(\int_0^\infty\left|\int_t^{t+v}f(u)du\right|^pdt\right)dv \leq \int_0^1Bdv =B,
\end{equation*}
where the constant $B$ is taken from Lemma \ref{lem. L^p equiv UB}. If $t\in[0,1]$ then the fact that $f_2$ is locally bounded ensures $\int_0^1 \left|\int_0^tf_2(s)ds \right|^p dt$ is finite.
\end{proof}
We are now in a position to state and prove our main result. By Lemma \ref{lem. L^p equiv UB}, the condition (\textbf{A}) below  is equivalent to \eqref{eq.unifLp}, but the latter condition is harder to check than (\textbf{A}).
\begin{theorem} \label{thm. x L^p stability}
    Let $x$ be the solution of \eqref{eq. x} and suppose $r \in L^1(\mathbb{R}_+)$. Then for $p \geq 1$, the following are equivalent:
    \begin{enumerate}
        \item[(\textbf{A})]  $\left\Vert \int_{\cdot}^{\cdot+\theta}f(s)ds \right\Vert_{L^p(\mathbb{R}_+)} < +\infty, \quad \text{for each } \theta \in (0,1]$;
        \item[(\textbf{B})] $x(\cdot,\xi) \in L^p(\mathbb{R}_+)$ for all $\xi \in \mathbb{R}$.
    \end{enumerate}    
\end{theorem}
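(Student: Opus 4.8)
The plan is to prove the equivalence by reducing both implications to the single convolution $r*f$ and then exploiting the resolvent equation \eqref{eq. r} in two complementary ways. A useful preliminary observation is that $r\in L^1(\mathbb{R}_+)$ forces $r'=\nu*r\in L^1(\mathbb{R}_+)$ by Young's inequality (as $\nu$ is a finite measure), so $r$ is absolutely continuous with integrable derivative; hence $r$ has a finite limit at infinity, which must be $0$ since $r\in L^1$, and in particular $r$ is bounded. Combined with $r\in L^1$ this yields $r\in L^p(\mathbb{R}_+)$ for every $p\geq 1$. Since the variation of constants formula \eqref{eq. VOC x} gives $x(\cdot;\xi)=r\xi+r*f$ with $r\xi\in L^p(\mathbb{R}_+)$ for every $\xi$, condition (\textbf{B}) is equivalent to the single statement $r*f\in L^p(\mathbb{R}_+)$.

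For (\textbf{A}) $\implies$ (\textbf{B}), I would invoke Lemma \ref{lem. f decomposition} to write $f=f_1+f_2$ with $f_1\in L^p(\mathbb{R}_+)$ and $F_2:=\int_0^\cdot f_2(s)\,ds\in L^p(\mathbb{R}_+)$, so that $r*f=r*f_1+r*f_2$. The first term lies in $L^p$ by Young's inequality, $\|r*f_1\|_p\leq\|r\|_1\|f_1\|_p$. For the second, I would integrate by parts in $s$: since $r$ is absolutely continuous and $F_2(0)=0$,
\begin{equation*}
\int_0^t r(t-s)f_2(s)\,ds = F_2(t)+\int_0^t r'(t-s)F_2(s)\,ds = F_2(t)+(r'*F_2)(t).
\end{equation*}
Here $F_2\in L^p$ by construction and $r'*F_2\in L^p$ by Young's inequality, using $r'\in L^1$ from the preliminary step. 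Hence $r*f\in L^p(\mathbb{R}_+)$ and (\textbf{B}) follows.

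For (\textbf{B}) $\implies$ (\textbf{A}), I would use that $y:=x(\cdot;0)=r*f$ solves \eqref{eq. x} with $\xi=0$, whence $f=y'-\nu*y$ almost everywhere. As $y$ is locally absolutely continuous, integrating this identity over $[t,t+\theta]$ gives
\begin{equation*}
\int_t^{t+\theta}f(s)\,ds = \bigl(y(t+\theta)-y(t)\bigr)-\int_t^{t+\theta}(\nu*y)(s)\,ds.
\end{equation*}
The bracketed term is in $L^p(\mathbb{R}_+)$ because $y\in L^p(\mathbb{R}_+)$ by (\textbf{B}) and the shifted function $y(\cdot+\theta)$ also lies in $L^p(\mathbb{R}_+)$. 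For the last term, $\nu*y\in L^p(\mathbb{R}_+)$ by Young's inequality ($\nu$ finite), and writing $\int_t^{t+\theta}(\nu*y)(s)\,ds=\int_0^\theta(\nu*y)(t+u)\,du$, Minkowski's integral inequality bounds its $L^p$ norm by $\theta\|\nu*y\|_p<+\infty$. Thus $\int_\cdot^{\cdot+\theta}f\in L^p(\mathbb{R}_+)$ for each $\theta\in(0,1]$, which is (\textbf{A}).

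The main obstacle is concentrated in the direction (\textbf{A}) $\implies$ (\textbf{B}), specifically in handling the non-$L^p$ component $f_2$: the decomposition furnished by Lemma \ref{lem. f decomposition} is exactly what makes this tractable, and the integration-by-parts step must be justified with care (absolute continuity of $r$, the vanishing boundary term $r(t)F_2(0)=0$, and the membership $r'\in L^1$). The reverse implication is comparatively direct once the resolvent equation is used to express $f$ through $y$; its only delicate point is controlling the measure-convolution term, which Minkowski's integral inequality settles.
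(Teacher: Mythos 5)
Your proposal is correct and follows essentially the same route as the paper: the same decomposition $f=f_1+f_2$ from Lemma \ref{lem. f decomposition}, the same integration by parts yielding $r*f_2=F_2+r'*F_2$ (the paper's identity \eqref{eq.key2}), and the same integrated-equation argument \eqref{eq. integrated x} for the converse. The only cosmetic differences are your use of Minkowski's integral inequality in place of the paper's H\"older--Fubini estimate for the term $\int_t^{t+\theta}(\nu*x)(s)\,ds$, and your explicit reduction to $\xi=0$.
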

\begin{proof}[Proof of Theorem \ref{thm. x L^p stability}]
First we show $(\textbf{A}) \implies (\textbf{B}).$ Recall that $x$ obeys a variation of constants formula given by $x(t,\xi)=r(t)\xi+(r\ast f)(t)$ for $t\geq0$, where $\ast$ denotes convolution of functions on $\mathbb{R}_+$. As $\nu \in M(\mathbb{R}_+)$ and $r \in L^1(\mathbb{R}_+)$ we have that $r' \in L^1(\mathbb{R}_+)$ and also $r(t)\to0,$ as $t\to \infty$. These facts, along with $r$ being absolutely continuous and 
well--behaved at both zero and infinity, ensure that $r\in L^p(\mathbb{R}_+)$ for all $p\geq1$. Thus $r(\cdot)\xi \in L^p(\mathbb{R}_+)$, so we need only focus on $x_1(t) \coloneqq (r\ast f)(t)$. Defining $f_3(t):=\int_0^tf_2(s)ds$, writing $f=f_1+f_2$ and integrating by parts we get
\begin{equation} \label{eq.key2}
    x_1(t) = (r\ast f_1)(t) +f_3(t)+(r' \ast f_3)(t), \quad t\geq 0.
\end{equation}
As $r, r' \in L^1(\mathbb{R}_+)$ and $f_1, f_3 \in L^p(\mathbb{R}_+)$ (by Lemma \ref{lem. f decomposition} and (\textbf{A})), an application of Theorem 2.2.2 in~\cite{GLS} ensures $x_1 \in L^p(\mathbb{R}_+)$ and the claim is proven.\\
To show (\textbf{B}) $\implies$ (\textbf{A}), take $\theta \in (0,1]$ and integrate \eqref{eq. x} over the interval $[t,t+\theta]$ to get
\begin{equation}  \label{eq. integrated x}
    x(t+\theta)-x(t)-\int_t^{t+\theta}(\nu \ast x)(s)ds=\int_t^{t+\theta}f(s)ds.
\end{equation}
 The first two terms on the left are in $L^p(\mathbb{R}_+)$. For the third, with $\frac{1}{p}+\frac{1}{q}=1$, by H\"{o}lder's inequality we get
\begin{align*}
    \int_0^\infty \left|\int_t^{t+\theta}(\nu \ast x)(s)ds \right|^pdt 
    & \leq \theta^{\frac{p}{q}}\int_0^\infty \int_t^{t+\theta}\left|(\nu \ast x)(s)\right|^pds dt\\
    &\leq \int_0^\infty \int_{\max{(s-\theta,0)}}^{s}\left|(\nu \ast x)(s)\right|^pdtds 
    \leq \int_0^\infty \left|(\nu \ast x)(s)\right|^pds.
\end{align*}
The last term is finite by virtue of Theorem 3.6.1 in Gripenberg et al.~\cite{GLS}. Thus, the left-hand side of \eqref{eq. integrated x} is in $L^p(\mathbb{R}_+)$, and therefore the righthand side is also, as was required. 
\end{proof}
The three key identities in extending this result to other spaces are \eqref{eq.key1}, \eqref{eq.key2} and for converse results, \eqref{eq. integrated x}. These identities hold, regardless of conditions on $f$, and show how the solution $x$ can be related to $f$ purely through the interval average $F(\cdot;\theta)$.  

\section{Example}
We consider the equation \eqref{eq. x} with  $f(t)=e^{\alpha t}\sin({e^{\beta t}})$ for $t\geq 0$, 
where $\nu$ and $\xi$ are defined as in the previous section and the parameters $\alpha$ and $\beta$ obey $0<\alpha<\beta$. Assume that the resolvent of the solution of \eqref{eq. x} is in $L^1(\mathbb{R}_+)$. In this case $f$ exhibits high frequency and rapidly growing oscillations. It is easily shown (see \cite{AL}) that 
$\int_t^{t+\theta}f(s)ds = O(e^{-(\beta-\alpha)t})$ as $t \to \infty$ for each $\theta>0$. This exponential decay ensures $\int_t^{t+\theta}f(s)ds \in L^p(\mathbb{R}_+)$ for any choices of $p\geq 1$ and $\theta>0$. Thus Theorem \ref{thm. x L^p stability} tells us that $x \in L^p(\mathbb{R}_+)$ despite the fact that $\int_0^\infty |f(s)|^pds = \infty$ for any choice of $p\geq 1$.
\newline

\textbf{Acknowledgements:} EL is supported by Science Foundation Ireland (16/IA/4443). JA is supported by the RSE Saltire Facilitation Network on Stochastic Differential Equations: Theory, Numerics and Applications (RSE1832).

\bibliographystyle{unsrt}

\end{document}